\documentclass[11pt]{amsart}
\usepackage[all]{xy}

\begin{document}

\title{Cohomologies of finite abelian groups}
\author[Yu. Drozd, A. Plakosh]{Yuriy A. Drozd and Andriana I. Plakosh}
\address{Institute of Mathematics, National Academy of Sciences of Ukraine, 
 Tereschenkivska 3, 01601 Kyiv, Ukraine}
 \email{y.a.drozd@gmail.com, drozd@imath.kiev.ua, 
 andrianaplakoshmail@gmail.com}
 \urladdr{www.imath.kiev.ua/$\sim$drozd}
 
\subjclass[2010]{20J06, 18G10, 20K01}

\keywords{cohomologies, finite abelian groups, resolution, $G$-lattice}

\newtheorem{theorem}{Theorem}[section]
\newtheorem{prop}[theorem]{Proposition}
\newtheorem{corol}[theorem]{Corollary}
\numberwithin{equation}{section}

\def\tM{\tilde{M}}				\def\hH{\hat{H}}				
\def\mZ{\mathbb{Z}}		\def\mQ{\mathbb{Q}}
\def\mS{\mathbb{S}}		\def\mF{\mathbb{F}}
\def\mP{\mathbb{P}}		\def\mT{\mathbb{T}}
\def\qR{{\boldsymbol R}}		\def\qK{{\boldsymbol K}}
\def\qP{{\boldsymbol P}}

\def\si{\sigma}			\def\ga{\gamma}
\def\ze{\zeta}			\def\la{\lambda}
\def\eps{\varepsilon}	\def\de{\delta}

\def\*{\otimes}			\def\bop{\bigoplus}
\def\xx{\times}			\def\dd{\partial}
\def\8{\infty}				\def\Arr{\Rightarrow}
\def\+{\oplus}			\def\xarr{\xrightarrow}

\def\setsuch#1#2{\left\{\,#1\mid #2\,\right\}}
\def\gnrsuch#1#2{\langle\,#1\mid #2\,\rangle}
\def\gnr#1{\langle\,#1\,\rangle}
\def\lst#1#2{ #1_1 , #1_2 , \dots , #1_{#2} }
\def\bop{\bigoplus}

\def\tor{\mathop\mathrm{Tor}\nolimits}
\def\ord{\mathop\mathrm{ord}\nolimits}
\def\cok{\mathop\mathrm{coker}\nolimits}
\def\tr{\mathop\mathrm{tr}\nolimits}
\def\Hom{\mathop\mathrm{Hom}\nolimits}
\def\id{\mathrm{id}}
\def\res{\mathop\mathrm{res}\nolimits}
\def\ss{\mathrm{s}}

\def\iff{if and only if }
\def\hhh{\phantom{hhh}}

\maketitle

\begin{abstract}
  We construct a simplified resolution for the trivial $G$-module $\mZ$, where $G$ is a finite abelian group, and compare it with
  the standard resolution. We use it to calculate cohomologies of irreducible $G$-lattices and their duals.
\end{abstract}

\section*{Introduction}

 The theory of cohomologies of groups was inspired by the works of Hurewicz on cohomologies of acyclic spaces and was founded
 in 1940's by Eilenberg--MacLane, Eckmann, Hopf and others. It was one of the origins of the homological algebra. It was
 also related to the theory of group extensions and projective representations, where cohomologies arise as factor sets. 
 This theory is widely used in topology, number theory, algebraic geometry and other branches of mathematics. Thus it
 is actively studied by plenty of mathematicians. In particular, there is a lot of papers devoted to the calculation of cohomologies 
 of concrete groups and their classes. In these investigations one often needs special sorts of resolutions, which are simpler and
 more convenient than the standard one. For instance, Takahashi \cite{tak} proposed a new approach to the calculation of
 cohomologies of finite abelian groups and gave applications of his method to the cohomologies of the trivial module and 
 of some Galois groups.
 
 The aim of our paper is to describe a rather simple resolution for finite abelian groups (Section~\ref{s1}) and to use it for 
 calculation of cohomologies of irreducible $G$-lattices and their duals (Sections~\ref{s4} and \ref{s5}). Our approach is close to 
 that of Takahashi, though it seems more explicit. We also compare our resolution with the standard one (Section~\ref{s2}) 
 and prove some facts concerning duality for cohomologies of $G$-lattices (Section~\ref{s3}). The results about the second
 cohomologies can be useful in the study of crystallographic groups and of Chernikov groups.

\section{Resolution}
\label{s1} 

 For a periodic element $a$ of a group $G$ we denote by $o(a)$ the order of $a$, $s_a=\sum_{i=0}^{o(a)-1}a^i$.
 Let $G=\prod_{i=1}^sG_i$ be a direct product of finite cyclic groups $G_i=\gnrsuch{a_i}{a_i^{o_i}=1}$ of orders $o_i=o(a_i)$, 
 $\qR=\mZ G$, $\mP=\qR[\lst xs]$
 and $\mP_n$ be the set of homogeneous polynomials from $\mP$ of degree $n$ (including $0$). We define a differential 
 $d:\mP_n\to\mP_{n-1}$ by the rule
\[
 d_n(x_1^{k_1}x_2^{k_2}\dots x_s^{k_s})=\sum_{i=1}^s(-1)^{K_i} C_ix_1^{k_1}\dots x_i^{k_i-1}\hspace*{-3pt}\dots x_s^{k_s},
\]
where $K_i=\sum_{j=1}^{i-1}k_j$ and
\[
 C_i=
 \begin{cases}
  a_i-1 &\text{if $k_i$ is odd},\\
  s_{a_i} &\text{if $k_i>0$ is even},\\
  0 &\text{if } k_i=0.
 \end{cases}
\]24 (2017), 144–157

 When speaking of the $G$-module $\mZ$, we always suppose that the elements of $G$ act trivially.

\begin{theorem}\label{resolution} 
 $\mP=(\mP_n,d_n)$ is a free resolution of the $G$-module $\mZ$.
\end{theorem}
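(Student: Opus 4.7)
The plan is to identify $\mP$ with the total complex of the tensor product (over $\mZ$) of the classical periodic resolutions for the individual cyclic factors $G_i$. First I would handle the cyclic case separately. For each $G_i$, the complex $\mP^{(i)}_\bullet$ with $\mP^{(i)}_k = \mZ G_i\cdot x_i^k$ free of rank one, and differential sending $x_i^k$ to $(a_i-1)x_i^{k-1}$ if $k$ is odd, to $s_{a_i}x_i^{k-1}$ if $k>0$ is even, and to $0$ if $k=0$, is a free resolution of $\mZ$. Exactness is elementary: in $\mZ G_i$ one has $\ker(a_i-1)=s_{a_i}\mZ G_i$ and $\ker(s_{a_i})=(a_i-1)\mZ G_i$, so consecutive kernels and images match.

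Next I would form $\qK_\bullet=\mP^{(1)}\*_\mZ\cdots\*_\mZ\mP^{(s)}$, interpreted as the total complex of the associated multicomplex. Since each $\mP^{(i)}_k$ is $\mZ$-free and each $\mP^{(i)}$ is acyclic in positive degrees with $H_0=\mZ$, iterated application of Künneth yields that $\qK_\bullet$ is a resolution of $\mZ\*_\mZ\cdots\*_\mZ\mZ=\mZ$. Moreover,
\[
 \qK_n=\bop_{k_1+\cdots+k_s=n}\mP^{(1)}_{k_1}\*_\mZ\cdots\*_\mZ\mP^{(s)}_{k_s},
\]
and since $\mZ G_1\*_\mZ\cdots\*_\mZ\mZ G_s\cong\qR$, each summand is free of rank one over $\qR$ with generator $x_1^{k_1}\*\cdots\*x_s^{k_s}$.

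The final step is to identify $\qK$ with $\mP$ via the $\qR$-linear map $x_1^{k_1}\*\cdots\*x_s^{k_s}\mapsto x_1^{k_1}\cdots x_s^{k_s}$, which is clearly a graded isomorphism. The standard Koszul sign convention for the total differential of a tensor product of complexes gives
\[
 d=\sum_{i=1}^s(-1)^{K_i}\,\id\*\cdots\*d^{(i)}\*\cdots\*\id,
\]
with $K_i=k_1+\cdots+k_{i-1}$ arising from commuting $d^{(i)}$ past the first $i-1$ slots; applying $d^{(i)}$ in slot $i$ reproduces exactly the coefficient $C_i$ of the statement. Thus the differential on $\qK$ matches the one on $\mP$ verbatim, and $\mP\cong\qK$ as resolutions.

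I expect no conceptual obstacle: the tensor-product construction of resolutions over a direct product is entirely standard, and the freeness of each $\mP_n$ over $\qR$ is immediate from the monomial basis. The one technical point where care is warranted is the bookkeeping of signs, namely checking that the Koszul sign $(-1)^{K_i}$ in the total complex really coincides with the sign built into $d_n$; I would sanity-check this on the case $s=2$, $k_1=k_2=1$, where the relation $d\circ d=0$ already forces the correct sign.
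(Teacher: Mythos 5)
Your proposal is correct and follows essentially the same route as the paper: reduce to the well-known periodic resolution for each cyclic factor, form the tensor product of these complexes over $\mZ$, and invoke the K\"unneth theorem (the paper cites that all cycles and boundaries in the factors are free abelian, exactly the hypothesis you use). Your additional details --- the explicit exactness check $\ker(a_i-1)=s_{a_i}\mZ G_i$, $\ker(s_{a_i})=(a_i-1)\mZ G_i$ in the cyclic case, and the verification that the Koszul sign $(-1)^{K_i}$ matches the stated differential --- are sound and merely make explicit what the paper leaves implicit.
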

\begin{proof}
  If $s=1$, it is well-known. If $\qR_i=\mZ G_i$ and $\mP^i$ denotes such resolution for the group $G_i$, then $\qR=\bigotimes_{i=1}^s\qR_i$
 and $\mP$ is the tensor product of complexes $\bigotimes_{i=1}^s\mP^i$. As all groups of cycles and boundaries in the complexes
 $\mP^i$ are free abelian, the claim follows from the K\"unneth relations \cite[Theorem~VI.3.1]{ce}.
\end{proof}

 \section{Correspondence with standard resolution}
 \label{s2} 
 
 To apply Theorem~\ref{resolution}, for instance, to extensions of groups, we have to compare it with the standard resolution, which is
 usually used for this purpose \cite{br,ce}. So, in what follows, $\mS$ denotes the normalized standard resolution for $\mZ$ as 
 $\qR$-module, $\setsuch{[\lst gn]}{g_i\in G\setminus\{1\}}$ is the usual basis of $\mS_n$ such that the standard differential $d^\ss$ is
 defined as
 \begin{align*}
  d^\ss_n[\lst gn]=g_1[g_2, \dots, g_n]&+\sum_{i=1}^n(-1)^i[g_1,\dots, g_ig_{i+1},\dots, g_n]+\\& +(-1)^n[\lst g{n-1}],  
 \end{align*}
 setting $[\lst gn]=0$ if some $g_i=1$. Note that $\mP_0=\mS_0=\qR$. 
 
 We denote $a^{\{i\}}=1+a+a^2+\dots a^{i-1}$. Then $s_a=a^{\{o(a)\}}$,
  \begin{align}\label{eq0} 
 & a^{\{i+k\}}= a^{\{i\}}+a^ia^{\{k\}},\\
  \intertext{in particular,}
 & a^{\{m+o(a)\}}=a^{\{m\}}+a^ms_a. \notag
  \end{align} 
  
 \begin{theorem}
 There is a quasi-isomorphism $\si:\mS\to\mP$ such that 
 \begin{equation}\label{eq2} 
 \begin{split}
   &\hspace*{-.5em} \si_0=\id, \\
 &\hspace*{-.5em} \si_1[a_1^{k_1}a_2^{k_2}\dots a_s^{k_s}]=\sum_{i=1}^s \big(\prod_{j=1}^{i-1} a_j^{k_j}\big) a_i^{\{k_i\}}x_i, \\
 &\hspace*{-.5em} \si_2 [a_1^{k_1}a_2^{k_2}\dots a_s^{k_s}\!,a_1^{l_1}a_2^{l_2}\dots a_s^{l_s}]{=}
  \sum_{i=1}^s\sum_{j=1}^i \Big(\prod_{q=1}^{i-1}a_q^{k_q} \prod_{r=1}^{j-1} a_r^{l_r}\Big)\, \si_2[a_i^{k_i},a_j^{l_j}],\\
  &\hspace*{-.5em}\text{\emph{where}\ }\
   \si_2[a_i^k,a_j^l]=
  \begin{cases}
   [(k+l)/o_i]x_i^2 &\text{\emph{if} }  i=j,\\
   0 &\text{\emph{if} }  i<j,\\
   a_j^{\{l\}}a_i^{\{k\}}x_jx_i &\text{\emph{if} }  i>j
  \end{cases}
 \end{split} 
 \end{equation}24 (2017), 144–157
 \end{theorem}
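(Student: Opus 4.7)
The plan is to exploit the fundamental comparison theorem for projective resolutions. Since both $\mS$ and $\mP$ are free resolutions of the trivial $G$-module $\mZ$, there is a chain map $\si\colon\mS\to\mP$ lifting $\id_\mZ$, unique up to chain homotopy, and it is automatically a quasi-isomorphism. Hence existence is formal, and the real content of the theorem is the explicit form of the first three components. I would define $\si_0,\si_1,\si_2$ by the formulas \eqref{eq2} and construct $\si_n$ for $n\ge3$ by the usual inductive lifting argument, using that $\mP$ is acyclic in positive degrees.

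The remaining work is to verify the two chain-map identities $d_1\si_1=\si_0 d^\ss_1$ and $d_2\si_2=\si_1 d^\ss_2$ on the canonical basis of $\mS$. The first is a telescoping check: for $g=a_1^{k_1}\cdots a_s^{k_s}$,
\[
 d_1\si_1[g]=\sum_{i=1}^s\Big(\prod_{j<i}a_j^{k_j}\Big)a_i^{\{k_i\}}(a_i-1)
 =\sum_{i=1}^s\Big(\prod_{j\le i}a_j^{k_j}-\prod_{j<i}a_j^{k_j}\Big)=g-1,
\]
using $a_i^{\{k_i\}}(a_i-1)=a_i^{k_i}-1$, which equals $\si_0 d^\ss_1[g]$.

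The second identity is more involved. The structure of the formula for $\si_2[g,h]$ as a sum over pairs $(p,q)$ with $q\le p$ of prefix-product contributions lets me reduce the verification to ``pure'' pairs $[a_p^k,a_q^l]$, provided I track the prefix products $\prod_{r<p}a_r^{k_r}$ and $\prod_{r<q}a_r^{l_r}$ on both sides of the identity. The sub-case $p<q$ is essentially vacuous: $\si_2[a_p^k,a_q^l]=0$ by definition, and an explicit expansion of $\si_1 d^\ss_2[a_p^k,a_q^l]$ using the already-checked formula for $\si_1$ also vanishes. The sub-case $p>q$ uses $d_2(x_q x_p)=(a_q-1)x_p-(a_p-1)x_q$ together with repeated application of $a^{\{m\}}(a-1)=a^m-1$ to match the expansion of $\si_1 d^\ss_2[a_p^k,a_q^l]$.

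The main obstacle, and the place I expect most of the calculation to live, is the sub-case $p=q$, which is the reason for the floor function $\lfloor(k+l)/o_p\rfloor$ in $\si_2[a_p^k,a_p^l]$. Here $\si_1 d^\ss_2[a_p^k,a_p^l]$ involves $\si_1[a_p^{k+l}]$, but in the normalised standard resolution the argument must first be reduced modulo $o_p$. Identity \eqref{eq0} gives $a_p^{\{k+l\}}=a_p^{\{k\}}+a_p^k a_p^{\{l\}}$ formally, while iterating it yields $a_p^{\{k+l\}}=a_p^{\{(k+l)\bmod o_p\}}+\lfloor(k+l)/o_p\rfloor s_{a_p}$. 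The discrepancy between these two expressions produces exactly $\lfloor(k+l)/o_p\rfloor s_{a_p}x_p$, which is precisely $d_2$ applied to $\lfloor(k+l)/o_p\rfloor x_p^2$ via $d_2(x_p^2)=s_{a_p}x_p$. Matching these two contributions pins down the floor factor in the formula for $\si_2[a_i^k,a_i^l]$ and completes the most delicate part of the verification.
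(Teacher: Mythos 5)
Your proposal follows essentially the same route as the paper: both verify $d_1\si_1=d_1^\ss$ by the telescoping identity $a_i^{\{k_i\}}(a_i-1)=a_i^{k_i}-1$, check $d_2\si_2=\si_1 d_2^\ss$ on the pure pairs $[a_i^k,a_j^l]$ in the three subcases (with the floor term coming from the discrepancy between $a_i^{\{k+l\}}$ and $a_i^{\{(k+l)\bmod o_i\}}$ in the normalized resolution), and invoke the comparison theorem to extend to a full quasi-isomorphism. The only divergence is cosmetic: where you assert a reduction of the general mixed basis element to pure pairs by tracking prefix products, the paper instead writes out the full expansion for $s=3$ and notes that the cross-terms telescope via \eqref{eq0} --- the same bookkeeping in a different presentation.
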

 
 Since $\mS$ and $\mP$ are free resolutions of $\mZ$, $\si$ induces isomoprhisms of cohomologies
 $H^n(\Hom_\qR(\mS,M))\to H^n(\Hom_\qR(\mP,M))$. In particular, combining $\si_2$ with
 cocycles from $\Hom_\qR(\mP_2,M)$, we obtain the ``usual'' presentation of cocycles from $H^2(G,M)$.
 
 \begin{proof}
  Actually, we have to show that the diagram
    \[
  \xymatrix{ \mS_2 \ar[r]^{d^\ss_2} \ar[d]_{\si_2} & \mS_1 \ar[d]_{\si_1} \ar[r]^{d^\ss_1} &\mS_0 \ar@{=}[d] \\
  				 \mP_2 \ar[r]^{d_2} & \mP_1 \ar[r]^{d_1} & \mP_0.
   }
 \]
 is commutative. Then the set of homomorphisms $\{\si_0,\si_1,\si_2\}$ extends to a quasi-isomorphism $\si:\mS\to\mP$.
 
 Note that $gh-1=(g-1)+g(h-1)$ and $a^k-1=a^{\{k\}}(a-1)$. Therefore,
 \begin{align*}
& d_1^\ss[a_1^{k_1}a_2^{k_2}\dots a_s^{k_s}]=a_1^{k_1}a_2^{k_2}\dots a_s^{k_s}-1
 		=\sum_{i=1}^s\big(\prod_{j=1}^{i-1}a_j^{k_j}\big)(a_i^{k_i}-1)=\\
 		&\ =\sum_{i=1}^s\big(\prod_{j=1}^{i-1}a_j^{k_j}\big)a_i^{\{k_i\}}(a_i-1)=
 		\sum_{i=1}^s\big(\prod_{j=1}^{i-1} a_j^{k_j}\big) a_i^{\{k_i\}}d_1x_i,
 \end{align*}
 hence $d_1\si_1=d_1^\ss$.
 
  Set $(r)_i=\res(r,o_i)$, the residue of $r$ modulo $o_i$. Then, for $0\le k<o_i$, $0\le l<o_i$,
  \begin{align*}
  & d_2^s[a_i^k,a_i^l]=\,a_i^k[a_i^l]-[a_i^{k+l}]+[a_i^k], \\ \intertext{thus}
  &\si_1d_2^\ss[a_i^k,a_i^l]=(a_i^ka_i^{\{l\}}-a_i^{\{(k+l)_i\}}+a_i^{\{l\}})x_i=\\
  &\qquad =(a_i^ka_i^{\{l\}}-a_i^{\{k+l\}}+a_i^{\{l\}}+[(k+l)/o_i]s_{a_i})x_i=\\
  & \qquad    =[(k+l)/o_i]s_{a_i}x_i=\\
  &\qquad =d_2([(k+l)/o_i]x^2_i),\\
  \intertext{so, if we set}
  & \si_2[a_i^k,a_i^l]=[(k+l)/o_i]x^2_i,\\
   \intertext{we have}
   &d_2\si_2[a_i^k,\,a_i^l]=\si_1d^\ss_2[a_i^k,a_i^l].\\
   \intertext{In the same way,} 
  &d_2^\ss[a_i^k,a_j^l]=\, a_i^k[a_j^l]-[a_i^ka_j^l]+[a_i^k],\\ 
  \intertext{thus, if $i<j$,}
  &\si_1 d_2^\ss[a_i^k,a_j^l]= a_i^ka_j^{\{l\}}x_j-a_i^{\{k\}}x_i-a_i^ka_j^{\{l\}}x_j+a_i^{\{k\}}x_i=0,\\
  \intertext{while if $i>j$}
  & \si_1d_2^\ss[a_i^k,a_j^l]=a_i^ka_j^{\{l\}}x_j-a_j^{\{l\}}x_j-a_j^la_i^{\{k\}}x_i+a_i^{\{k\}}x_i=\\
  &\qquad	= (a_i^k-1)a_j^{\{l\}}x_j - (a_j^l-1)a_i^{\{k\}}x_i =\\
  &\qquad	= -d_2(a_j^{\{l\}}a_i^{\{k\}}x_jx_i).
  \intertext{So, if we set}
  &	 \si_2[a_i^k,a_j^l]=
  \begin{cases}
   0 &\text{if }  i<j,\\
   a_j^{\{l\}}a_i^{\{k\}}x_jx_i &\text{if }  i>j
  \end{cases}
\intertext{we have}24 (2017), 144–157
  &d_2\si_2[a_i^k,\,a_j^l]=\si_1d_2^\ss [a_i^k,a_j^l]
    \end{align*}
    for $i\ne j$.
    
   Let now $\si_2$ is defined by the rule \eqref{eq2}. We check that $d_2\si_2=\si_1d_2^\ss$ for $s=3$. The general case
   is analogous, though a bit cumbersome. We write $a,b,c$ instead of $a_1,a_2,a_3$ and $x,y,z$ instead of $x_1,x_2,x_3$.
 Then
 \begin{align*}
   & \si_1d_2^\ss[a^ib^jc^r,a^kb^lc^s]= \\
   &\ =\si_1(a^ib^jc^r[a^kb^lc^s]-[a^{i+k}b^{j+l}c^{r+s}]+[a^ib^jc^r]) = \\
   &\ = a^ib^jc^r(a^{\{k\}}x+a^kb^{\{l\}}y+a^kb^lc^{\{s\}}z)-\\
   &\ -a^{\{i+k\}}x-a^{i+k}b^{\{j+l\}}y-a^{i+k}b^{j+l}c^{\{r+s\}}z +\\
   &\ +[(i+k)/o_a]s_a x +a^{i+k}[(j+l)/o_b]s_by+  a^{i+k}b^{j+l}[(r+s)/o_c]s_c x +\\
  &\ + a^{\{i\}}x +a^ib^{\{j\}}y +a^ib^jc^{\{r\}}z=\\
  &\ =(a^ib^jc^ra^{\{k\}}-a^{\{i+k\}}+a^{\{i\}}+[(i+k)/o_a]s_a)x+\\
  &\ +a^i(a^kb^jc^rb^{\{l\}}-a^kb^{\{j+l\}}+b^{\{j\}}+a^k[(j+l)/o_b]s_b)y+\\
  &\ +a^ib^j(a^kb^lc^rc^{\{s\}}-a^kb^lc^{\{r+s\}}+c^{\{r\}}+a^kb^l[(r+s)/o_c]s_c)z,
 \intertext{while} 
  &d_2\si_2[a^ib^jc^r,a^kb^lc^s]= \\24 (2017), 144–157
 &\ =d_2 \big(\!-a^ia^{\{k\}}b^{\{j\}}xy - a^ib^ja^{\{k\}}c^{\{s\}}xz - a^{i+k}b^jb^{\{l\}}c^{\{r\}}yz +\\
 &\ +[(i+k)/o_a]x^2 +a^{i+k}[(j+l)/o_b]y^2+  a^{i+k}b^{j+l}[(r+s)/o_c]z^2\big) =\\
 & \ =-a^i(a^k-1)b^{\{j\}}y+a^i(b^j-1)a^{\{k\}}x -a^i(a^k-1)b^jc^{\{r\}}z +\\
 &\ + a^ib^j(c^r-1)a^{\{k\}}x -a^{i+k}b^j(b^l-1)c^{\{r\}}z +a^{i+k}b^j(c^r-1)b^{k\}}y,\\
  &\ +[(i+k)/o_a]s_a x +a^{i+k}[(j+l)/o_b]s_by+  a^{i+k}b^{j+l}[(r+s)/o_c]s_c x=\\
  &\ (-a^ia^{\{k\}} +a^ib^jc^ra^{\{k\}}+[(i+k)/o_a]s_a )x +\\
  &\ +a^i(-a^kb^{\{j\}}+b^{\{j\}}+a^kb^jc^rb^{\{l\}}-a^kb^jb^{\{l\}}+a^k[(j+l)/o_b]s_b)y+\\
  &\ +a^ib^j(c^{\{r\}}-a^kb^lc^{\{r\}}+a^kb^l[(r+s)/o_c]s_c)z.
 \end{align*}
 Relations \eqref{eq0} immediately imply that both results are equal.
 \end{proof}
 
 \section{Cohomologies of $G$-lattices.}
 \label{s3}
 
 In this section $G$ denotes a finite group, $\qR=\mZ G$.
 Recall that a \emph{$G$-lattice} (or an \emph{integral representation} of $G$) is a $G$-module $M$ such that its abelian group
 is free of finite rank. They also say that $M$ is \emph{a lattice in the $\mQ G$-module $\tM=\mQ\*_\mZ M$}. 
 Two $G$-lattices $M,N$ are said to be \emph{of the same genus} if $M_p\simeq N_p$24 (2017), 144–157
 for each prime $p$, where $M_p=\mZ_p\*_\mZ M$ ($\mZ_p=\setsuch{r/z}{r\in\mZ,\,s\in\mZ\setminus{p\mZ}}$).
Then they write $M\vee N$. We also set $M^*=\Hom_\mZ(M,\mZ)$, where $G$ acts by the rule $gf(u)=f(g^{-1}u)$.
 
 We denote by $\hH^n(G,M)$ the \emph{Tate cohomologies} of $G$ with coefficients in $M$ \cite{br,ce}. Let
 \[
  \mF: \dots \to\mF_n\xarr{d_n} \mF_{n-1}\xarr{d_{n-1}} \dots\xarr{d_2} \mF_1\xarr{d_1} \mF_0\to0
 \]
 be a free resolution of $\mZ$, where all modules $\mF_n$ are finitely generated,
 \[
  \mF^*: 0\to \mF_0^* \xarr{d^*_1} \mF_1^*\xarr{d^*_2} \dots\xarr{d^*_{n-1}} \mF_{n-1}^*\xarr{d^*_n}\mF_n^*\to \dots
 \]
 be the dual complex, $d_0:\mF_0\to\mF^*_0$ be the composition of the maps 
 $\mF_0\to\cok d_1\simeq\mZ\simeq\ker d^*_0\to\mF^*_0$. Set $\mF_{-n}=\mF^*_{n-1},\,d_{-n}=d^*_n$. The sequence
 \begin{align*}
 \mF^+: \dots& \to\mF_n\xarr{d_n} \mF_{n-1}\xarr{d_{n-1}} \dots\xarr{d_2} \mF_1\xarr{d_1} \mF_0\xarr{d_0}\\
 & \xarr{d_0}\mF_{-1} \xarr{d_{-1}} \mF_{-2}\xarr{d_{-2}} \dots\xarr{d_{-n}} \mF_{-n}^*\xarr{d_{-n}}\mF_{-n-1}\to \dots
 \end{align*}
 is called a \emph{complete resolution} for the group $G$. Then $\hH^n(G,M)$ are just the cohomologies of the complex
 $\Hom_{\qR}(\mF^+,M)$. If $\mF_0=\qR$ and the surjection $\mF_0\to\mZ$ maps $g$ to $1$, then $\mF_{-1}\simeq\qR$
 and $d_0$ is just the \emph{trace}, i.e. the multiplication by $\tr_G=\sum_{x\in G}x$. It is the case for the resolutions $\mF$
 and $\mS$. 

 \begin{prop}\label{31} 
  Let $G$ be a finite group, $M,N$ be $G$-lattices such that $M\vee N$. Then $\hH^n(G,M)\simeq \hH^n(G,N)$ 
 for all $n$.
 \end{prop}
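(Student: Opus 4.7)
The plan is to reduce the statement to the relation $\mZ_p\*_\mZ \hH^n(G,M)\simeq \hH^n(G,M_p)$, which reads off each $p$-primary component of the Tate cohomology from the localization $M_p$.

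First, I would recall the standard fact that for a finite group $G$ and any $G$-module $M$ the Tate cohomology $\hH^n(G,M)$ is annihilated by $|G|$. Combined with the possibility of choosing a complete resolution $\mF^+$ in which every $\mF_n$ is finitely generated free over $\qR$, this forces $\hH^n(G,M)$ to be a finite abelian group for any $G$-lattice $M$, and hence to split canonically as the direct sum of its $p$-primary parts over the primes $p$ dividing $|G|$.

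Next, I would verify that localization commutes with Tate cohomology. Since $\mZ_p$ is flat over $\mZ$ and each $\mF_n$ is finitely generated, the functor $\mZ_p\*_\mZ -$ commutes with $\Hom_\qR(\mF^+,-)$ and with the formation of cohomology, so
\[
 \mZ_p\*_\mZ \hH^n(G,M)\simeq \hH^n(G,M_p).
\]
For a finite abelian group, tensoring with $\mZ_p$ inverts every prime $q\ne p$ and hence extracts the $p$-primary component; thus the left-hand side is precisely the $p$-primary part of $\hH^n(G,M)$.

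Finally, from the hypothesis $M\vee N$ one has $M_p\simeq N_p$ as $\mZ_pG$-modules for every prime $p$, and hence $\hH^n(G,M_p)\simeq \hH^n(G,N_p)$. Assembling the isomorphisms of $p$-primary components over all primes $p$ then yields $\hH^n(G,M)\simeq \hH^n(G,N)$. I expect the only delicate point to be the commutation of $\mZ_p\*_\mZ-$ with the formation of Tate cohomology, which is routine given flatness of $\mZ_p$ and the finite generation of $\mF^+$.
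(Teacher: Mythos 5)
Your proposal is correct and follows essentially the same route as the paper: decompose $\hH^n(G,M)$ into its $p$-primary components (using that it is annihilated by $\#(G)$), identify the $p$-part with $\hH^n(G,M_p)$ via flatness of $\mZ_p$, and invoke $M_p\simeq N_p$. The paper's proof is just a more condensed version of the same argument.
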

 \begin{proof}
  It is known that all groups $\hH^n(G,M)\ (n>0)$ are periodic of period $\#(G)$, hence 
   $\hH^n(G,M)\simeq \bop_{p|\#(G)} \hH^n(G,M)_p$. Moreover, as $\mZ_p$ is flat over $\mZ$, 
  $\hH^n(G,M)_p\simeq \hH^n(G,M_p)$. It implies he claim.
 \end{proof}
 
 We denote by $DM$ the dual $G$-module $DM=\Hom_\mZ(M,\mT)$, where $\mT=\mQ/\mZ$. 
 
 \begin{prop}\label{32} 
  Let $M$ be a $G$-lattice. Then
  \begin{align}
  &  \hH^{n-1}(G,DM)\simeq D\hH^{-n}(G,M), \label{e31} \\  
   &\hH^n(G,DM)\simeq \hH^{n+1}(G,M^*),		\label{e32}\\
   & \hH^n(G,M^*)\simeq D\hH^{-n}(G,M). \label{e33} 
  \end{align}
 \end{prop}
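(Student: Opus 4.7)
The plan is to establish \eqref{e32} first by an elementary short-exact-sequence argument, then derive \eqref{e31} via a hom-tensor adjunction applied to the complete resolution, and finally obtain \eqref{e33} as a formal consequence of the other two.

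For \eqref{e32} I would start from the short exact sequence $0\to\mZ\to\mQ\to\mT\to 0$ of abelian groups. Since $M$ is $\mZ$-free, the functor $\Hom_\mZ(M,-)$ is exact and produces a short exact sequence of $G$-modules
\[
 0 \to M^* \to \Hom_\mZ(M,\mQ) \to DM \to 0.
\]
The middle module is a $\mQ G$-module, so multiplication by $\#(G)$ acts invertibly on it; on the other hand, multiplication by $\#(G)$ is zero on the Tate cohomology of any finite group. Hence $\hH^n(G,\Hom_\mZ(M,\mQ))=0$ for all $n$, and the long exact sequence of Tate cohomology associated to the displayed sequence yields the shift in \eqref{e32}.

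For \eqref{e31} I would invoke the natural adjunction
\[
 \Hom_\qR(F, \Hom_\mZ(M,\mT)) \;\simeq\; \Hom_\mZ(F \otimes_\qR M, \mT),
\]
valid for any $\qR$-module $F$: a $\qR$-linear $\phi$ corresponds to the bilinear form $\beta(f,m)=\phi(f)(m)$, and $\qR$-linearity of $\phi$ together with the trivial $G$-action on $\mT$ forces $\beta(gf,gm)=\beta(f,m)$, so $\beta$ descends to the coinvariants of the diagonal $G$-action, which is precisely $F\otimes_\qR M$. Applying this levelwise to a complete resolution $\mF^+$ of finitely generated free $\qR$-modules yields an isomorphism of cochain complexes $\Hom_\qR(\mF^+,DM) \simeq \Hom_\mZ(\mF^+\otimes_\qR M,\mT)$. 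Because $\mT$ is $\mZ$-injective, the $n$-th cohomology of the right-hand side is $DH_n(\mF^+\otimes_\qR M)=D\hH_n(G,M)$, where $\hH_n$ denotes Tate homology. The self-dual structure built into $\mF^+$ (its negative part is the $\qR$-dual of the positive part) supplies the classical identity $\hH_n(G,M)=\hH^{-n-1}(G,M)$, and reindexing $n\mapsto n-1$ gives \eqref{e31}.

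Then \eqref{e33} follows by concatenation: $\hH^n(G,M^*)\simeq\hH^{n-1}(G,DM)\simeq D\hH^{-n}(G,M)$. The hardest point will be verifying the adjunction as an isomorphism of \emph{cochain complexes}, that is, that the $G$-invariance of the bilinear form corresponds exactly to the relations defining $F\otimes_\qR M$ and that the differentials on both sides match up. Once this is settled, the remaining ingredients — exactness of $\Hom_\mZ(M,-)$ on $\mZ$-free $M$, the vanishing of Tate cohomology for $\mQ G$-modules, universal coefficients against the injective $\mT$, and the Tate symmetry $\hH_n\simeq\hH^{-n-1}$ — are entirely standard.
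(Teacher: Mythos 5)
Your proposal is correct, and it is worth separating the three parts. For \eqref{e32} your argument coincides exactly with the paper's: apply the exact functor $\Hom_\mZ(M,-)$ ($M$ being $\mZ$-free) to $0\to\mZ\to\mQ\to\mT\to0$ to get $0\to M^*\to\Hom_\mZ(M,\mQ)\to DM\to0$, kill the middle term in Tate cohomology because $\#(G)$ acts invertibly on a $\mQ$-vector space while annihilating $\hH^n$, and read off the degree shift from the long exact sequence. Where you genuinely diverge is \eqref{e31}: the paper disposes of it with a single citation to Cartan--Eilenberg (Corollary XII.6.5), whereas you reprove that duality from scratch via the adjunction $\Hom_\qR(F,DM)\simeq\Hom_\mZ(F\otimes_\qR M,\mT)$ applied levelwise to a complete resolution, exactness of $\Hom_\mZ(-,\mT)$ (divisibility of $\mT$), and the identification $H_n(\mF^+\otimes_\qR M)\simeq\hH^{-n-1}(G,M)$ coming from $\mF_{-n}=\mF_{n-1}^*$ together with $F\otimes_\qR M\simeq\Hom_\qR(F^*,M)$ for finitely generated free $F$. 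That argument is sound: the point you rightly flag as the delicate one --- that $G$-invariance of the pairing $\beta(gf,gm)=\beta(f,m)$ matches precisely the relations defining $F\otimes_\qR M$, and that the differentials agree on both halves of $\mF^+$ and across the splice map $d_0$ --- does go through, and it makes the proposition self-contained at the cost of some bookkeeping, while the paper's citation buys brevity. The derivation of \eqref{e33} by composing \eqref{e31} with \eqref{e32} (shifted by one degree) is identical in both treatments.
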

 
 If $M=\mZ$, \eqref{e33} coincides with \cite[Theorem~XII.6.6]{ce}.
 
 \begin{proof}
  \eqref{31} follows from \cite[Corollary~XII.6.5]{ce}.
  
  Consider the exact sequence $0\to\mZ\to\mQ\to\mT\to0$. As $M$ is free abelian, it gives the exact sequence of $G$-modules
  \[
   0\to M^* \to \Hom_\mZ(M,\mQ) \to DM \to 0.
  \]
  $\hH^n(G,\Hom_\mZ(M,\mQ))=0$ for all $n$, since the multiplication by $\#(G)$ is an automorphism of $\Hom_\mZ(M,\mQ)$,  
  whence we obtain \eqref{e32}. 
  
  \eqref{e33} follows from \eqref{e31} and \eqref{e32}.
 \end{proof}
 
 We also need some information on cohomologies of direct products.
 
 \begin{prop}\label{33} 
  Let $N$ be a normal subgroup of $G$, $F=G/N$ and $\gcd(\#(N),\#(F))=1$. For every $G$-module $M$ and all $n$
  \begin{equation}\label{e34} 
     \hH^n(G,M)\simeq \hH^n(N,M)^F\+\hH^n(F,M^N).
  \end{equation}
 \end{prop}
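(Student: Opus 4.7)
The plan is to invoke the Lyndon--Hochschild--Serre spectral sequence for the extension $1\to N\to G\to F\to 1$ and to exploit the coprimality hypothesis to force its degeneration and split the resulting filtration. The nonpositive Tate range then requires a separate argument using restriction and corestriction.

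For $n\ge 1$ consider
\[
E_2^{p,q}=H^p(F,H^q(N,M))\ \Longrightarrow\ H^{p+q}(G,M).
\]
Whenever $p,q\ge 1$, the entry $E_2^{p,q}$ is simultaneously annihilated by $\#(F)$ (since $H^p(F,-)$ is $\#(F)$-torsion for $p\ge 1$) and by $\#(N)$ (since $H^q(N,M)$ is $\#(N)$-torsion for $q\ge 1$); by B\'ezout and $\gcd(\#(N),\#(F))=1$, $E_2^{p,q}=0$ off the two axes. The surviving entries $E_2^{p,0}=H^p(F,M^N)$ and $E_2^{0,q}=H^q(N,M)^F$ cannot support any nontrivial differentials between them, for the same torsion/coprimality reason. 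Hence $E_\infty=E_2$, and the Hochschild--Serre filtration on $H^n(G,M)$ yields
\[
0\to H^n(F,M^N)\to H^n(G,M)\to H^n(N,M)^F\to 0.
\]
The kernel is $\#(F)$-torsion and the cokernel is $\#(N)$-torsion; coprimality splits this sequence canonically, giving \eqref{e34} for $n\ge 1$.

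For the Tate range $n\le 0$ use restriction and corestriction directly. Because $\hat H^n(G,M)$ is annihilated by $\#(G)=\#(N)\#(F)$, it decomposes canonically as a direct sum $A\+B$ of its $\#(N)$-primary and $\#(F)$-primary components. The restriction $\res\colon\hat H^n(G,M)\to\hat H^n(N,M)$ lands in $\hat H^n(N,M)^F$ (normality of $N$) and kills $B$ (the target is $\#(N)$-torsion). The Mackey identity $\mathrm{cor}\circ\res=\#(F)\cdot\mathrm{id}$, with $\#(F)$ invertible on $A$, makes $\res|_A$ injective; conversely $\res\circ\mathrm{cor}$ equals multiplication by $\#(F)$ on $F$-invariants, yielding surjectivity onto $\hat H^n(N,M)^F$. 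A parallel argument, using a splitting $\tilde F\subset G$ supplied by Schur--Zassenhaus together with the norm $M\to M^N$ and the inclusion $M^N\hookrightarrow M$, identifies $B$ with $\hat H^n(F,M^N)$.

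The main obstacle is the second identification: in negative Tate degrees no inflation map is directly available, so the isomorphism $B\simeq\hat H^n(F,M^N)$ must be assembled from corestriction along $\tilde F\hookrightarrow G$, the norm $M\to M^N$, and the $F$-invariants machinery, with careful verification that the composition is inverse to the corresponding restriction-and-norm map on the $\#(F)$-primary part. Once this is in hand, combining both primary identifications furnishes \eqref{e34} uniformly in $n$.
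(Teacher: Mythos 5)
Your treatment of degrees $n\ge 1$ coincides with the paper's: the Lyndon--Hochschild--Serre spectral sequence degenerates because every $E_2^{p,q}$ with $p,q\ge 1$ is annihilated by both $\#(F)$ and $\#(N)$, and the resulting two-term filtration of $H^n(G,M)$ splits for the same coprimality reason. That half is complete and correct.

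The gap is in your treatment of $n\le 0$. The restriction--corestriction argument does identify the $\#(N)$-primary component of $\hH^n(G,M)$ with $\hH^n(N,M)^F$; that part is standard and valid in all Tate degrees. But the identification of the $\#(F)$-primary component with $\hH^n(F,M^N)$ is exactly what you label ``the main obstacle'' and then leave undone: you list the ingredients (a Schur--Zassenhaus complement $\tilde F\subset G$, the trace $M\to M^N$, the inclusion $M^N\hookrightarrow M$) but never write down the two comparison maps or verify that their composites act as units on the relevant primary parts. The difficulty is genuine, not bookkeeping: in negative Tate degrees there is no inflation, the double-coset analysis of $\res^G_{\tilde F}\circ\mathrm{cor}^G_{\tilde F}$ for a non-normal complement $\tilde F$ is not a single multiplication-by-index identity, and the trace $\tr_N\colon M\to M^N$ need not be surjective, so the passage from $\hH^n(\tilde F,M)$ to $\hH^n(F,M^N)$ requires an actual argument. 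As written, the negative-degree half is an outline rather than a proof. The paper closes this range by a downward dimension shift instead: choosing $0\to L\to P\to M\to 0$ with $P$ free over $\mZ G$, one has $P$ free over $\mZ N$ and $P^N$ free over $\mZ F$, the quotient $M^N/\mathrm{im}(P^N\to M^N)$ is killed by $\#(N)$ (the image contains $\tr_N M$) and hence has vanishing $F$-cohomology, and the statement for $\hH^{n-1}$ follows from the statement for $\hH^n$ applied to $L$. You should either adopt that induction or actually carry out the primary-decomposition comparison you sketch; in its current form the claim for $n\le 0$ is not established.
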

 \begin{proof}
  As $\#(G)$ annihilates all $H^n(G,M)$ if $n>0$ and the same is true for $N$ and $F$, in the Hochschild--Serre spectral
  sequence
  \[
   H^p(F,H^q(N,M)) \Longrightarrow H^n(G,M)
  \]
  all terms with $p>0$ and $q>0$ are zero. Hence, if $n>0$,
  \begin{align*}
   \hH^n(G,M)&\simeq H^0(F,\hH^n(N,M))\+\hH^n(F,H^0(N,M))=\\ =&\hH^n(N,M)^F\+\hH^n(F,M^N).  
  \end{align*}
  Suppose now that the claim holds for $\hH^n$. Choose an exact sequence $0\to L\to P\to M\to0$, where $P$ is a
  free $\mZ G$-module. Then 
  \[
   \hH^{n-1}(G,M)\simeq \hH^n(G,L)\simeq \hH^n(N,L)^F\+\hH^n(F,L^N).
  \]
  As $P$ is also free as $\mZ N$-module, $\hH^n(N,L)\simeq \hH^{n-1}(N,M)$. On the other hand, there are exact sequences
  \[
   0\to L^N\to P^N \to M'\to 0 
   \]
  and
  \[
  0\to M'\to M^N\to M^N/M'\to 0,
  \]
  where $M'$ is the image of the map $P^N\to M^N$. Obviously, $M'\supseteq \tr_NM$, thus $\#(N)(M^N/M')=0$,
  whence $\hH^n(F,M^N/M')=0$. Therefore,
  \[
   \hH^{n-1}(F,M^N)\simeq \hH^{n-1}(F,M')\simeq \hH^n(F,L^N),
  \]
  since $P^N$ is a free $\mZ F$-module. So the isomorphism \eqref{34} holds for $\hH^{n-1}$, hence for all values of $n$.
  \end{proof}
  
 \begin{corol}\label{34} 
  Let $G=G_1\xx G_2$ with $\gcd(\#(G_1),\#(G_2))=1$, $M=M_1\*_\mZ M_2$, where $M_i$ is a $G_i$-lattice \textup{(}$i=1,2$\textup{)}. 
  Then
  \[
   \hH^n(G,M)\simeq \hH^n(G_1,M_1)\*_\mZ M_2^{G_2}\+ M_1^{G_1}\*_\mZ \hH^n(G_2,M_2).
  \]
 \end{corol}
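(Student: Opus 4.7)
The plan is to apply Proposition~\ref{33} with $N=G_1$ and $F=G/N\simeq G_2$; the coprimality hypothesis is exactly what that proposition requires, and it delivers
\[
\hH^n(G,M)\simeq\hH^n(G_1,M_1\*M_2)^{G_2}\+\hH^n(G_2,(M_1\*M_2)^{G_1}).
\]
The task then is to identify each summand with its expected form.

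Both summands depend on the auxiliary fact that for a finite group $H$, an $H$-lattice $L$, and a finitely generated $\mZ$-free abelian group $A$ carrying the trivial $H$-action, one has $\hH^n(H,L\*A)\simeq\hH^n(H,L)\*A$. Indeed, for a complete resolution $\mF^+$ of $\mZ$ over $\mZ H$, the triviality of the $H$-action on $A$ and the decomposition of $A$ into copies of $\mZ$ give a natural isomorphism $\Hom_{\mZ H}(\mF^+_k,L\*A)\simeq\Hom_{\mZ H}(\mF^+_k,L)\*A$, and $-\*A$ is exact since $A$ is $\mZ$-flat. Applying this to $(H,L,A)=(G_1,M_1,M_2)$ (using triviality of the $G_1$-action on $M_2$) yields $\hH^n(G_1,M_1\*M_2)\simeq\hH^n(G_1,M_1)\*M_2$ as $G_2$-modules; and the identity $(M_1\*M_2)^{G_1}=M_1^{G_1}\*M_2$ (immediate from the triviality of the $G_1$-action on $M_2$) combined with the same observation applied to $(G_2,M_2,M_1^{G_1})$ yields $\hH^n(G_2,M_1^{G_1}\*M_2)\simeq M_1^{G_1}\*\hH^n(G_2,M_2)$.

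The remaining step, and the main obstacle, is to show $(\hH^n(G_1,M_1)\*M_2)^{G_2}\simeq\hH^n(G_1,M_1)\*M_2^{G_2}$. Write $H=\hH^n(G_1,M_1)$; Tate cohomology of $G_1$ is annihilated by $\#(G_1)$, so $\#(G_2)$ acts invertibly on $H$ by coprimality. Since $M_2$ is $\mZ$-free, its $G_2$-fixed subgroup $M_2^{G_2}$ is pure, the quotient $Q=M_2/M_2^{G_2}$ is $\mZ$-free, and the sequence $0\to H\*M_2^{G_2}\to H\*M_2\to H\*Q\to 0$ is exact. For $m\in M_2$ with $\bar m\in Q^{G_2}$ one has $(g-1)m\in M_2^{G_2}$ for every $g\in G_2$; summing over $g$ gives $\tr_{G_2}(m)-\#(G_2)m\in M_2^{G_2}$, and since $\tr_{G_2}(m)\in M_2^{G_2}$, purity forces $m\in M_2^{G_2}$, whence $Q^{G_2}=0$. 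Then $\tr_{G_2}$ vanishes on $Q$, hence on $H\*Q$; but any invariant $x\in(H\*Q)^{G_2}$ satisfies $\tr_{G_2}(x)=\#(G_2)x$, so invertibility of $\#(G_2)$ on $H\*Q$ gives $(H\*Q)^{G_2}=0$. Taking $G_2$-invariants of the short exact sequence completes the identification and the proof.
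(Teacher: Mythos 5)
Your proof is correct and follows essentially the same route as the paper: apply Proposition~\ref{33} with $N=G_1$, $F=G_2$, together with the identifications $\hH^n(G_i,M)\simeq\hH^n(G_i,M_i)\otimes_\mZ M_j$ and $M^{G_i}=M_i^{G_i}\otimes_\mZ M_j$ coming from exactness of $\otimes_\mZ M_j$ for the free abelian group $M_j$. The only difference is that you explicitly justify the remaining identification $(\hH^n(G_1,M_1)\otimes_\mZ M_2)^{G_2}\simeq\hH^n(G_1,M_1)\otimes_\mZ M_2^{G_2}$ (via purity of $M_2^{G_2}$ and invertibility of $\#(G_2)$ on the $\#(G_1)$-torsion group $\hH^n(G_1,M_1)$), a step the paper absorbs into the phrase ``just a reformulation of Proposition~\ref{33}.''
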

 \begin{proof}
  As $M_i$ are free abelian, $\*_\mZ M_i$ is an exact functor and $M^{G_i}=M_i^{G_i}\*_\mZ M_j\ (j\ne i)$. Hence 
  $\hH^n(G_i,M)\simeq \hH^n(G_i,M_i)\*_\mZ M_j$, where $j\ne i$. So the claim is just a reformulation of 
  Proposition~\ref{33} for this special case.
 \end{proof}
    
 \section{Cohomologies of irreducible $G$-lattices}
 \label{s4}
 
 A $G$-lattice $M$ is called \emph{irreducible} if there are no submodules $0\ne N\subset M$ such that $M/N$ 
 is torsion free (i.e. again a $G$-lattice). Equivalently, $\tM=\mQ\*_\mZ M$ is a simple $\mQ G$-module. 
 If $G$ is a finite abelian group, then any simple $\mQ G$-module is defined by a group homomorphism $\rho:G\to\qK^\xx$,
 where $\qK$ is a cyclotomic field and the image of $\rho$ generates the ring of integers of $\qK$. Therefore, any two
 $G$-lattices in $\qK$ are of the same genus \cite{cr}, so have the same cohomologies. In particular, if $M$ is a $G$-lattice
 in $\qK$, so is $M^*$, hence $M^*\vee M$ and 
 \begin{equation}\label{e41} 
 \hH^n(G,M)\simeq \hH^n(G,M^*)\simeq D\hH^{-n}(G,M)\simeq D\hH^{n-1}(G,DM).
 \end{equation}
 The subgroup of periodic elements of $\qK$ is cyclic and generated by a primitive root of unity $\ze$. Hence, there is an
 element $a\in G$ such that $\rho(a)=\ze$. Let $G=\prod_{i=1}^sC_i$, where $C_i=\gnrsuch{a_i}{a_i^{o_i}=1}$ are cyclic groups. 
 We can suppose that $a_1=a$. Set $o=o_1$.
 Changing he generators $a_i$, we can make $\rho(a_i)=1$ for $i\ne1$. Let $G'=\gnr{a_2,a_3,\dots,a_s}$, so $G=C_1\xx G'$.
 Then $M\simeq M_1\*_\mZ \mZ$, where $M_1$ is $M$ considered as $C_1$-module and $\mZ$ is the trivial $G'$-module.
 Note that $M^G=0$, as $\ze v=v$ implies $v=0$. Hence $\hH^0(G,M)=0$.
 Consider the trace $T=\sum_{g\in G}g=(\sum_{k=0}^{o-1}a^k)(\sum_{g\in G'}g)$. Obviously, $\sum_{k=0}^{o-1}\ze^k=0$,
 hence $TM=0$. It implies that $\hH^{-1}(G,M)=H_0(G,M)=M/(\ze-1)M$. If $o=p^m$ fore some $m$, then also $o(\ze)=p^k$ for some $k$,
 whence $N_{\qK/\mQ}(1-\ze)=p$  \cite{bs} and $\hH^{-1}(G,M)=H_0(G,M)\simeq\mZ/p\mZ$. If $o(\ze)$ is not a degree of a prime number,
 then $N_{\qK/\mQ}(1-\ze)=1$ and $\hH^{-1}(G,M)=H_0(G,M)=0$ (it also follows from Corollary~\ref{34})..
 
 Let a finite abelian group $G$ be a direct product $G_1\xx G_2$ and the orders of $G_1$ and $G_2$ be coprime. 
 If $\qK_i\ (i=1,2)$ is a cyclotomic field arising from a simple $\mQ G_i$-module, then $\qK=\qK_1\*_\mQ\qK_2$ is 
 again a field, hence a simple $\mQ G$-module, and all simple $\mQ G$-modules arise in this way. If $M_i\ (i=1,2)$ is a
  $G_i$-lattice in $\qK_i$, then $M=M_1\*_\mZ M_2$ is a $G$-lattice in $\qK$, unique up to genus. 
  Corollary~\ref{34} shows that $\hH^n(G,M)=0$ if neither $M_1$ nor $M_2$ is trivial. If $M_1$ is non-trivial and $M_2$
  is trivial, then $\hH^n(G,M)\simeq\hH^n(G_1,M_1)$, and if both $M_1$ and $M_2$ are trivial, then
  $\hH^n(G,M)\simeq\hH^n(G_1,\mZ)\+\hH^n(G_2,\mZ)$. Thus we only need to consider the case of $p$-groups.
 Note also that 
  $\mT=\bop_p\mT_p$ and $\mT_p$ is the \emph{quasicyclic $p$-group}, i.e. the direct limit $\varinjlim_m\mZ/p^m\mZ$ 
  with respect to the natural embeddings $\mZ/p^m\,Z\to\mZ/p^{m+1}\mZ\,$. Hence, if $M$ is finitely generated, 
  $DM\simeq \bop_pDM_p$, where $D_pM=\Hom_\mZ(M,\mT_p)$. If $M$ is a lattice, the additive group of $D_pM$
  is a direct product of several copies of $\mT_p$. Moreover, if $G$ is a $p$-group, $\hH^n(G,D_qM)=0$ and
  $D_q\hH^n(G,M)=0$ for $q\ne p$, so we can always replace $D$ by $D_p$ in all formulae from Proposition~\ref{32}.
  
  So, let $G=\prod_{k=1}^sG_k$, where $G_k$ is a cyclic group of order $p^{m_k}$. We calculate cohomologies of a non-trivial
  irreducible $G$-lattices. Actually, it is easier to calculate homologies.
  
   \begin{theorem}\label{41} 
   Let $M$ be a non-trivial irreducible $G$-lattice. Then 
   \\
   $ H_n(G,M)\simeq(\mZ/p\mZ)^{\nu(n,s)}$,
   where
   \begin{equation}\label{e42} 
   \nu(n,s)=(-1)^n\sum_{i=0}^{n}\binom{-s}{i}.
   \end{equation}
  \end{theorem}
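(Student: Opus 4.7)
The plan is to apply the free resolution $\mP$ of Theorem~\ref{resolution}, which factors as a tensor product $\mP \simeq \mP^1 \* \cdots \* \mP^s$ of the resolutions $\mP^k$ of $\mZ$ as $\mZ G_k$-module. Combining this with the decomposition $M \simeq M_1 \*_\mZ \mZ$ established before the theorem (where $G_1$ acts on $M_1$ via a primitive root of unity $\ze$ and $G'=G_2 \xx \cdots \xx G_s$ acts trivially), the complex computing $H_n(G, M)$ factors as a tensor product of complexes of abelian groups
\[
\mP \*_\qR M \simeq Q_1 \*_\mZ Q_2 \*_\mZ \cdots \*_\mZ Q_s,
\]
where $Q_1 = \mP^1 \*_{\mZ G_1} M_1$ computes $H_n(G_1, M_1)$ and $Q_k = \mP^k \*_{\mZ G_k} \mZ$ computes $H_n(G_k, \mZ)$ for $k>1$. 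Each $Q_k$ is a two-periodic complex of finitely generated free abelian groups, and a direct inspection of the differentials of $\mP^k$ yields: $H_n(Q_1) \simeq \mZ/p\mZ$ for even $n \ge 0$ and $0$ for odd $n$ (since $\ze-1$ is injective on $M_1$ with cokernel $\mZ/p\mZ$, while $s_{a_1} = \sum_i \ze^i = 0$ on $M_1$); and $H_n(Q_k) \simeq \mZ$ for $n = 0$, $\mZ/p^{m_k}\mZ$ for odd $n$, and $0$ for positive even $n$.

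The next step is to show that $H_n(G, M) = H_n(Q)$ is an elementary abelian $p$-group. Writing $Q = Q_1 \* R$ with $R = Q_2 \* \cdots \* Q_s$, the K\"unneth formula decomposes $H_n(Q)$ into summands of the form $H_i(Q_1) \* H_j(R)$ (with $i+j = n$) and $\tor(H_i(Q_1), H_j(R))$ (with $i+j = n-1$). Since each nonzero $H_i(Q_1)$ equals $\mZ/p\mZ$, every summand is either $H_j(R)/pH_j(R)$ or the $p$-torsion subgroup of $H_j(R)$, so $H_n(Q)$ is annihilated by $p$. To count its dimension, I pass to $\bar Q = \mF_p \*_\mZ Q \simeq \bar Q_1 \*_{\mF_p} \cdots \*_{\mF_p} \bar Q_s$, where $\bar Q_k = \mF_p \*_\mZ Q_k$. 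A direct calculation shows $H_n(\bar Q_k) \simeq \mF_p$ for all $n \ge 0$: for $k > 1$ the differentials vanish modulo $p$; for $k = 1$, the classical factorization $\Phi_{p^{m_1}}(x) \equiv (x-1)^{\phi(p^{m_1})} \pmod{p}$ identifies $M_1/pM_1 \simeq \mF_p[y]/(y^{\phi(p^{m_1})})$ with $y = \ze - 1$, on which multiplication by $y$ has $1$-dimensional kernel and $1$-dimensional cokernel in every degree.

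K\"unneth over the field $\mF_p$ then yields the Poincar\'e series identity
\[
\sum_{n \ge 0} \bigl(\dim_{\mF_p} H_n(\bar Q)\bigr)\, t^n \;=\; (1-t)^{-s},
\]
so $\dim_{\mF_p} H_n(\bar Q) = \binom{n+s-1}{s-1}$. Combining this with the universal coefficient theorem and the fact that each $H_n(Q)$ is already an $\mF_p$-vector space, one obtains the first-order recursion $a_n + a_{n-1} = \binom{n+s-1}{s-1}$ with $a_{-1} = 0$, where $a_n = \dim_{\mF_p} H_n(G, M)$. Telescoping yields
\[
a_n \;=\; \sum_{i=0}^{n} (-1)^{n-i} \binom{i+s-1}{i} \;=\; (-1)^n \sum_{i=0}^{n} \binom{-s}{i} \;=\; \nu(n, s),
\]
completing the proof.

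The main technical obstacle is the computation of $H_*(\bar Q_1)$, since it requires unpacking the mod-$p$ structure of the cyclotomic integers $M_1 = \mZ[\ze]$; once this is established, everything else reduces to standard applications of the K\"unneth and universal coefficient theorems together with a routine inversion of a linear recursion.
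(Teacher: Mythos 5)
Your proof is correct, and while it rests on the same two pillars as the paper's (the tensor/K\"unneth decomposition of the situation and the $2$-periodic homology of the cyclic factors), the counting mechanism is genuinely different. The paper stays over $\mZ$ throughout: it peels off one cyclic factor at a time, applies the integral K\"unneth formula to $G=G'\xx G_s$, checks that every tensor and Tor summand contributes $(\mZ/p\mZ)^{\nu(i,s-1)}$ exactly once for each $0\le i\le n$, and so arrives at the two-variable recursion $\nu(n,s)=\sum_{i=0}^n\nu(i,s-1)$, i.e.\ $\nu(n,s)=\nu(n,s-1)+\nu(n-1,s)$, solved by double induction on $(n,s)$. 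You use the integral K\"unneth formula only once (to see that $H_n(G,M)$ is elementary abelian) and then reduce the whole complex mod $p$, where K\"unneth has no Tor terms and the Poincar\'e series multiply to $(1-t)^{-s}$; the universal coefficient theorem converts this into the one-variable recursion $a_n+a_{n-1}=\binom{n+s-1}{s-1}$, which telescopes immediately. The two recursions are consistent, since $\nu(n,s)+\nu(n-1,s)=(-1)^n\binom{-s}{n}=\binom{n+s-1}{s-1}$. Your route buys a shorter induction and makes the binomial coefficients transparent, at the price of the mod-$p$ analysis of $H_*(\bar Q_1)$; there, note that the relevant exponent is $\phi(\ord\ze)$ rather than $\phi(p^{m_1})$, since $a_1$ need not act faithfully --- this does not affect the argument, as multiplication by $\ze-1$ on $\mF_p[y]/(y^N)$ has one-dimensional kernel and cokernel for any $N\ge1$. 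As a side check, both arguments give $\nu(2,s)=(s^2-s+2)/2$, in agreement with \eqref{e42} and with Section~\ref{s5}; the value $(s^2+s+2)/2$ displayed after the theorem statement is a typo.
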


   Note that for fixed $n$ the value of $\nu(n,s)$ is a polynomial of degree $n$ with respect to $s$ with the leading coefficient $(n!)^{-1}$. 
   For instance,
   \begin{align*}
   \nu(0,s)&=1,\\
   \nu(1,s)&=s-1,\\
   \nu(2,s)&=\frac{s^2+s+2}2,\\
   \nu(3,s)&=\frac{s^3+5s-6}6.
   \end{align*} 
   
   \begin{proof}
     We consider $G$ as a direct product $G'\xx G_s$, where $G'=\prod_{i=1}^{s-1}G_i$, and suppose that $G_s$ acts trivially on $M$.
   Then $M$ can be considered as the outer tensor product $M'\xx_\mZ\mZ$, where $M'=M$ considered as $G'$-module and $\mZ$ is
   considered as trivial $G_s$-module. Then we can use the K\"unneth formula \cite[Corollary~V.5.8]{br}:
   \begin{multline}\label{e40} 
           H_n(G,M)
         \simeq\big(\bop_{i=0}^n H_i(G',M')\*_\mZ H_{n-i}(G_s,\mZ)\big) \+\\
   \+    \big( \bop_{i=0}^{n-1}\tor_1^\mZ(H_i(G',M'),H_{n-i-1}(G_s,\mZ))\big).
   \end{multline}
   Recall that, for a cyclic group $C=\mZ/p^m\mZ$,
   \begin{align*}
    H_0(C,\mZ)&=\mZ;\\
    H_n(C,\mZ)&=
    \begin{cases}
     \mZ/p^m\mZ &\text{if $n$ is odd,}\\
     0 &\text{if $n$ is even;}
    \end{cases}\\
     \intertext{while for a non-trivial irreducible lattice $M$}
     H_n(C,M)&=
     \begin{cases}
      \mZ/p\mZ&\text{if $n$ is even,}\\
      0 &\text{if $n$ is odd,}
     \end{cases}\\
     \intertext{that is}
     \nu(n,1)&=
     \begin{cases}
      1&\text{if $n$ is even,}\\
      0 &\text{if $n$ is odd.}
     \end{cases}\\
     \intertext{Moreover,}
     H_0(G,M)&=\mZ/p\mZ,\\
     \intertext{that is}
     \nu(0,s)&=1.
   \end{align*}
      Thus \eqref{41} is valid for $n=0$ and for $s=1$, the minimal values of $n$ and $s$.
   Therefore, the K\"unneth formula implies that $H^n(G,M)\simeq(\mZ/p\mZ)^{\nu(n,s)}$ for some $\nu(n,s)$. Moreover, it implies that
   \[
      \textstyle
    \nu(n,s)=\sum_{k=0}^n\nu(n,s-1)=\nu(n,s-1)+\nu(n-1,s) \\
   \]
   Hence we can prove \eqref{e41} by induction, supposing that it is true for $\nu(n,s-1)$ and $\nu(n-1,s)$. Then we have
   \begin{align*}
   \textstyle
   \nu(n,s)&=\nu(n,s-1)+\nu(n-1,s)= \\
   &=(-1)^n\sum_{i=0}^n\binom{-s+1}i -(-1)^n\sum_{i=0}^{n-1}\binom{-s}i = \\
   &=(-1)^n\sum_{i=0}^n\left(\binom{-s+1}i-\binom{-s}{i-1}\right) =\\
   &=(-1)^n\sum_{i=0}^n\binom{-s}i.
   \qedhere
   \end{align*}
   \end{proof}
      
   Note that in this case $\hH^{-1}(G,M)=H_0(G,M)$ and $\hH^0(G,M)=0$.
   
  The formulae \eqref{e41} and \eqref{e42} give the following result.
     
   \begin{corol}\label{42} 
   If $M$ is a non-trivial irreducible $G$-lattice, then
       \[
        \hH^n(G,M)\simeq\hH^{n-1}(G,DM)\simeq (\mZ/p\mZ)^{\nu(|n|-1,s)}.
       \]    
   \end{corol}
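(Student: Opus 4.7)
My plan is to combine Theorem~\ref{41} on homologies with the duality chain \eqref{e41} and the standard identification of Tate cohomology with ordinary homology in negative degrees. For any finite group one has $\hH^{-n}(G,M)=H_{n-1}(G,M)$ for $n\ge 2$, and in the exceptional low-degree cases the discussion preceding Theorem~\ref{41} already establishes $\hH^0(G,M)=0$ (since $M^G=0$) and $\hH^{-1}(G,M)=H_0(G,M)$ (since $TM=0$). Substituting Theorem~\ref{41} therefore yields
\[
\hH^{-n}(G,M)\simeq(\mZ/p\mZ)^{\nu(n-1,s)} \quad\text{for every } n\ge 1,
\]
consistent with the convention $\nu(-1,s)=0$ (empty sum) matching $\hH^0(G,M)=0$.

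Next I would apply \eqref{e41} in the form $\hH^n(G,M)\simeq D\hH^{-n}(G,M)$. Since $D(\mZ/p\mZ)=\Hom_\mZ(\mZ/p\mZ,\mQ/\mZ)\simeq\mZ/p\mZ$, the functor $D$ sends any finite elementary abelian $p$-group to an isomorphic group. Hence for $n\ge 1$ we obtain $\hH^n(G,M)\simeq(\mZ/p\mZ)^{\nu(n-1,s)}=(\mZ/p\mZ)^{\nu(|n|-1,s)}$, while for $n\le -1$ the same formula follows directly from the previous paragraph via $\hH^n(G,M)=\hH^{-|n|}(G,M)$.

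For the remaining isomorphism with $\hH^{n-1}(G,DM)$ I would invoke \eqref{e41} once more in the form $\hH^n(G,M)\simeq D\hH^{n-1}(G,DM)$. Having just shown $\hH^n(G,M)$ to be finite, the group $\hH^{n-1}(G,DM)$ is finite as well, so Pontryagin self-duality $DD\simeq\id$ on finite abelian groups gives $\hH^{n-1}(G,DM)\simeq D\hH^n(G,M)\simeq\hH^n(G,M)$. The argument is largely bookkeeping once Theorem~\ref{41} is in place; the only points that require care are a uniform treatment of the boundary cases $n=0,\pm 1$ of Tate cohomology and the verification that $D$ preserves the relevant finite groups via $D(\mZ/p\mZ)\simeq\mZ/p\mZ$, so I do not expect any substantive obstacle.
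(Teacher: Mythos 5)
Your argument is correct and is exactly what the paper intends: the printed proof is the single remark that ``the formulae \eqref{e41} and \eqref{e42} give the result,'' i.e.\ combine the duality chain \eqref{e41} with Theorem~\ref{41} via the identification of negative Tate cohomology with homology, using that $D$ fixes finite elementary abelian $p$-groups. Your write-up just makes explicit the boundary cases $n=0,\pm1$ that the paper handles in the surrounding text.
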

   
 Analogous calculations give the known result for the trivial $G$-module $\mZ$ (cf. \cite{lyn,tak}).

   \begin{theorem}\label{43} 
    If $n\ne0$ and $m_1\ge m_2\ge \dots \ge m_s$, then
   \begin{equation}\label{e43} 
     \textstyle
    \hH^n(G,\mZ)\simeq \bop_{k=1}^s (\mZ/p^{m_k}\mZ)^{\nu(|n|-1,k)+(-1)^n}.
    \end{equation}
   \end{theorem}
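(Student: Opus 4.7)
The plan is to imitate the proof of Theorem~\ref{41}: induct on $s$ via the K\"unneth formula, using Proposition~\ref{32} to pass between positive and negative degrees. The starting point is that, by the universal coefficient theorem, for $n\ge2$ the finite $p$-torsion group $\hH^n(G,\mZ)=H^n(G,\mZ)$ is isomorphic as an abelian group to $H_{n-1}(G,\mZ)$, while $\hH^{\pm1}(G,\mZ)=0$; and Proposition~\ref{32} applied with $M^*\simeq\mZ$ gives $\hH^n(G,\mZ)\simeq D\hH^{-n}(G,\mZ)$, which identifies the two halves of Tate cohomology (using that $D$ acts as the identity on finite $p$-torsion abelian groups up to isomorphism). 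So it suffices to compute $H_n(G,\mZ)$ for $n\ge1$.

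Next I would write $G=G'\xx G_s$ with $G'=\prod_{k=1}^{s-1}G_k$, taking $G_s$ to be the cyclic factor of smallest order, and apply the K\"unneth formula \eqref{e40} just as in the proof of Theorem~\ref{41}. The homology of $G_s=C_{p^{m_s}}$ is $\mZ$ in degree $0$, $\mZ/p^{m_s}\mZ$ in odd positive degrees, and $0$ in positive even degrees. The crucial minimality $m_s\le m_k$ for $k<s$ forces every $\mZ/p^{m_k}\mZ\*\mZ/p^{m_s}\mZ$ and every $\tor_1^\mZ(\mZ/p^{m_k}\mZ,\mZ/p^{m_s}\mZ)$ to equal $\mZ/p^{m_s}\mZ$. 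Consequently the multiplicity of $\mZ/p^{m_k}\mZ$ for $k<s$ in $H_n(G,\mZ)$ is inherited unchanged from $H_n(G',\mZ)$, while the new $\mZ/p^{m_s}\mZ$-summands arise solely from the K\"unneth terms $H_i(G',\mZ)\*H_j(G_s,\mZ)$ and $\tor_1^\mZ(H_i(G',\mZ),H_j(G_s,\mZ))$ with $j>0$.

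Collecting these contributions and feeding in the inductive hypothesis, the resulting recursion matches the identity $\nu(n,s)=\nu(n,s-1)+\nu(n-1,s)$ already exploited in the proof of Theorem~\ref{41}; together with the base cases ($s=1$ recovering the standard cohomology of a cyclic group, and $|n|=1$ giving zero directly) this forces \eqref{e43}. The main obstacle I expect is bookkeeping the contribution of the free summand $H_0(G',\mZ)=\mZ$: in Theorem~\ref{41} the zeroth homology of $G'$ with coefficients in the nontrivial irreducible module is already finite $p$-torsion, but here $H_0(G')$ is free, and its tensor with the odd-degree homology of $G_s$ produces an extra copy of $\mZ/p^{m_s}\mZ$ at every inductive step. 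This is precisely the source of the $(-1)^n$ correction in \eqref{e43}, and it must be carried through the induction carefully in order to reproduce the exponent $\nu(|n|-1,k)+(-1)^n$ stated in the theorem.
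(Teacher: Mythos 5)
Your strategy is essentially the paper's own: reduce to $H_{n-1}(G,\mZ)$ via universal coefficients and Proposition~\ref{32}, split off the smallest cyclic factor $G_s$, apply the K\"unneth formula \eqref{e40}, note that $m_s\le m_k$ forces all new summands to be copies of $\mZ/p^{m_s}\mZ$, and induct on $s$. The gap is that the one step which actually produces the exponent in \eqref{e43} --- ``collecting these contributions \dots forces \eqref{e43}'' --- is asserted rather than carried out, and carrying it out does not give \eqref{e43}. Write $\mu(n,s)$ for the number of cyclic summands of $H_n(G,\mZ)$. The K\"unneth formula gives $\mu(n,s)=\sum_{i=1}^{n}\mu(i,s-1)+\eps_n$ with $\eps_n=1$ for $n$ odd and $0$ for $n$ even (this $\eps_n$ is exactly your ``extra copy coming from $H_0(G')\*H_n(G_s)$''), hence $\mu(n,s)-\mu(n,s-1)=\mu(n-1,s)-(-1)^n$, and one checks by induction that $\mu(n,s)=\nu(n,s)-(-1)^n$ for $n\ge1$. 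Therefore the number of \emph{new} $\mZ/p^{m_s}\mZ$-summands at each stage is $\mu(n,s)-\mu(n,s-1)=\nu(n-1,s)$, which yields $H_n(G,\mZ)\simeq\bop_{k=1}^s(\mZ/p^{m_k}\mZ)^{\nu(n-1,k)}$ for $n\ge1$ and consequently $\hH^n(G,\mZ)\simeq\bop_{k=1}^s(\mZ/p^{m_k}\mZ)^{\nu(|n|-2,k)}$ for $|n|\ge2$ (with $\nu(-1,k)=0$ covering $|n|=1$). That is not the exponent $\nu(|n|-1,k)+(-1)^n$ of \eqref{e43}.

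A direct numerical check pins this down: $\hH^2(G,\mZ)\simeq H^1(G,\mT)\simeq\Hom(G,\mT)\simeq G$, so each $\mZ/p^{m_k}\mZ$ occurs with multiplicity $1=\nu(0,k)$, whereas \eqref{e43} predicts multiplicity $\nu(1,k)+1=k$; already for $G=(\mZ/p\mZ)^2$ it returns $(\mZ/p\mZ)^3$ instead of $(\mZ/p\mZ)^2$. (The paper's own Section~\ref{s5} computes $H^2(G,\mZ)\simeq\bop_i\mZ/p^{m_i}\mZ$, in conflict with \eqref{e43}.) The paper's argument reaches the stated formula only through compensating slips: it claims $\mu(n,s)=\nu(n,s)+(-1)^n$ where the recursion gives $\nu(n,s)-(-1)^n$, and then attaches the increment $\nu(n-1,s)-(-1)^n$ to the $\mZ/p^{m_s}\mZ$-part even though its own displayed identity $\mu(n,s)=\mu(n,s-1)+\nu(n-1,s)$ says the increment is $\nu(n-1,s)$; a further index shift between $H_n$ and $\hH^n$ goes unaccounted for. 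Your sketch, being a faithful outline of the same computation, would --- if completed honestly --- prove the corrected statement with exponent $\nu(|n|-2,k)$; as written, the place where you say the free summand $H_0(G')$ ``is precisely the source of the $(-1)^n$ correction'' is exactly where the discrepancy is papered over, so the proposal neither proves \eqref{e43} nor detects that \eqref{e43} cannot be proved as stated.
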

   \noindent
   Recall that $\hH^0(G,\mZ)\simeq\mZ/p^m\mZ$, where $m=\sum_{k=1}^sm_k$.
   \begin{proof}
    First of all, the K\"unneth formula \eqref{e40} implies that $H_n(G,\mZ)$ is a direct sum of $\mu(n,s)$ cyclic groups
    so that
   \begin{align*}
   &  \mu(n,s)=\sum_{i=1}^n\mu(i,s-1)+\eps,\\
   \intertext{where}
   & \eps=
   \begin{cases}
    1 &\text{if $n$ is odd},\\
    0 &\text{if $n$ is even},
   \end{cases}\\
   \intertext{whence}
   & \mu(n,s)=\mu(n,s-1)+\mu(n-1,s)-(-1)^{n-1}.\\
   \intertext{Using inductiion by $s$, we obtain that}
   & \mu(n,s)=\nu(n,s)+(-1)^n,\\
   \intertext{hence}
   & \mu(n,s)=\mu(n,s-1)+\nu(n-1,s).\\ 
   \intertext{Note that all groups $H^i(G_s,\mZ)$ are of period $p^{m_s}$. Therefore, by \eqref{e40},}
     &H_n(G,\mZ)\simeq H_n(G',\mZ)\+ (\mZ/p^{m_s}\mZ)^r\\
     \intertext{for some $r$. Together with the formula for $\mu(n,s)$, it gives that}
     &H_n(G,\mZ)\simeq H_n(G',\mZ)\+ (\mZ/p^{m_s}\mZ)^{\nu(n-1,s)-(-1)^n}.\\
     \intertext{By induction, we obtain that}
  & \textstyle     H_n(G,\mZ)
   \simeq  \bop_{k=1}^s (\mZ/p^{m_k}\mZ)^{\nu(n-1,k)-(-1)^n}.
  \end{align*}
   In view of \eqref{e41}, it is just the formula  \eqref{e43}.
   \end{proof}
 
  \section{Explicit formulae}
  \label{s5} 
  
   In this section we find explicit formulae for crossed homomoprhisms (elements of $H^1(G,M)$) and cocycles (elements of
   $H^2(G,M)$) for irreducible latticies and their duals (the latter are important, for instance, in study of Chernikov groups
   see \cite{gud}). We use the resolution defined in Section~\ref{s1}.
   
   Let $G=\prod_{i=1}^{m_s}G_i$, where $G_i=\gnrsuch{a_i}{a_i^{p^{m_i}}=1}$ is a cyclic group of order
  $o_i=p^{m_i}$. We set $s_i=s_{a_i}$. For a cochain $\mu:\mP_n\to M$ we denote by $\dd\mu$ its coboundary, that is
  the composition $\mu d_{n+1}:\mP_{n+1}\to M$. Then, if $\xi:\mP_1\to M$, $i<j$,
  \begin{equation}\label{e51} 
  \begin{split}
   &\dd\xi(x_i^2)=s_i\xi(x_i),\\
   &\dd\xi(x_ix_j)=(a_i-1)\xi(x_j)-(a_j-1)\xi(x_i).
  \end{split}
  \end{equation}
  Thus $\xi$ is a cocycle \iff 
  \begin{equation}\label{e52} 
   \begin{split}
    &s_i\xi(x_i)=0 \ \text{ for all } i,\\
    &(a_i-1)\xi(x_j)=(a_j-1)\xi(x_i) \ \text{ for all } i\ne j.
   \end{split}
    \end{equation}    
     If $\ga:\mP_2\to M$, $i<j<k$, then   
  \begin{align*}\label{e53} 
  \hhh & \dd\ga(x_i^3)=(a_i-1)\ga(x^2)=0,\\
  \hhh & \dd\ga((x_i^2x_j)= s_i \ga(x_ix_j)+(a_j-1)\ga(x_i^2),\\
  \hhh & \dd\ga(x_ix_j^2)= (a_i-1)\ga(x_j^2)-s_j\ga(x_ix_j),\\
  \hhh & \dd\ga(x_ix_jx_k)= (a_i-1)\ga(x_jx_k)-(a_j-1)\ga(x_ix_k)+(a_k-1)\ga(x_ix_j).
  \end{align*}
  Thus $\ga$ is a cocycle \iff  
    \begin{equation}\label{e54} 
   \begin{split}
    &(a_i-1)\ga(x^2_i)=0 \ \text{ for all } i,\\
    &s_i\ga(x_ix_j)=-(a_j-1)\ga(x_i^2),\\
    &s_j\ga(x_jx_i)=(a_i-1)\ga(x_j^2),\\
    &(a_j-1)\ga(x_ix_k)=(a_i-1)\ga(x_jx_k)+(a_k-1)\ga(x_ix_j).
   \end{split}
    \end{equation}
    
    Finally, if we identify an element $u\in M$ with the homomorphism $\mP_0\to M$ which maps $a$ to $au$, then 
    $\dd u(x_i)=(a_i-1)u$.
  
   First suppose that $M=\mZ$. 
   Then the element $s_i$ acts on $M$ as $p^{m_i}$ and the formulae \eqref{e52} show that $H^1(G,\mZ)=0$.       
   As $a_i-1$ acts as $0$, the formulae \eqref{e54} mean that $\ga$ is a cocycle \iff  $\ga(x_ix_j)=0$. 
   The formulae \eqref{e51} imply that, adding a coboundary, we can reduce $\ga(x_i^2)$ modulo $p^{m_i}$. Therefore, 
   $H^2(G,\mZ)\simeq\bop_{i=1}^{m_s}\mZ/p^{m_i}\mZ$ and generators of this group can be chosen as the cohomology
   classes of the cocycles $\ga_k:\mP_2\to\mZ$ such that $\ga_k(x_ix_j)=0$ for all $i,j$ and $\ga_k(x_i^2)=\delta_{ik}$.
   
   For the dual module $D_p\mZ=\mT_p$, the formulae \eqref{e52} mean that $\xi$ is a cocylce \iff $p^{m_i}\xi(x_i)=0$.
   Hence $ H^1(G,\mT_p)\simeq\bop_{i=1}^{m_s}\mT_{m_i}$, where $\mT_{m_i}=\setsuch{u\in\mT_p}{p^{m_i}u=0}$ 
   (it is a cyclic group of order $p^{m_i}$).
   As $\mT_p$ is divisible, the formulae \eqref{e51} imply that, adding a coboundary to a $2$-dimensional cocycle $\ga$, 
   one can always make $\ga(x_i^2)=0$. Then the formulae \eqref{e54} mean that $p^{m_{ij}}\ga_{x_ix_j}=0$, where 
   $m_{ij}=\min\{m_i,m_j\}$. Hence $ H^2(G,\mT_p)\simeq\bop_{i<j}\mT_{m_{ij}}\simeq\bop_{i<j}\mZ/p^{m_{ij}}\mZ$, and 
   generators of this group are the classes of cocycles $\ga_{kl}\ (1\le k<l\le s)$ such that $\ga_{kl}(x_i^2)=0$ for all $i$,
   while $\ga_{kl}(x_ix_j)=\de_{ki}\de_{lj}u_{kl}$, where $u_{kl}$ is a fixed element of $\mT_p$ of order $p^{m_{kl}}$.
   
  Let now $M$ be a lattice in a cyclotomic field $\qK$ of order $p^m$ such that $a_1$ acts as the multiplication by the  
  primitive root $\ze$ of unity of order $p^m$ and all $a_i\ (i>1)$ act trivially. As we can choose any
  lattice in the same genus, we can suppose that $M=\mZ[\ze]$. Therefore, 
  the formulae \eqref{e52} show that $\xi$ is a cocycle \iff $\xi(x_i)=0$ for $i>1$. As $\ze-1$ is a prime element in $\mZ[\ze]$ 
  with the norm $p$ \cite{bs}, $M/(\ze-1)M\simeq\mZ/p\mZ$. Hence, adding a coboundary $\dd u$ to $\xi$, one can make 
  $\xi(x_1)=\la$, where $\la\in\mZ$ is defined modulo $p$. Thus $ H^1(G,M)\simeq\mZ/p\mZ$. The formulae \eqref{e54} 
  show that $\ga$ is a cocycle \iff $\ga(x_1^2)=0$, $\ga(x_ix_j)=0$ if $1<i<j$ and $p^{m_i}\ga(x_1x_i)=(\ze-1)\ga(x_i^2)$. 
  The formulae \eqref{e51} imply that, adding a coboundary, one can make $\ga(x_1x_i)=\la_i$, where $\la_i\in\mZ$ is 
  defined modulo $p$. Then $\ga(x_i^2)$ is uniquely defined. Thus $ H^2(G,M)\simeq(\mZ/p\mZ)^{s-1}$. The generators
  of this group are the classes of cocycles $\ga_k\ (1<k\le s)$ such that $\ga_k(x_1^2)=\ga_(x_ix_j)=0$ for all $1<i<j$,
  $\ga_k(x_1x_i)=\de_{ik}$, $\ga_k(x_i)^2=0$ if $i\ne k$ and $(1-\ze)\ga_k(x_k)=p^{m_k}$. 
  
  Consider the dual module $D_pM$. As the multiplication by $\ze-1$ is injective on $M$, it is surjective on $D_pM$.
  On the other hand, the subgroup $\setsuch{u\in D_pM}{(\ze-1)u=0}$ is dual to $M/(\ze-1)M$,
  so it is generated by one element $u_0$ of period $p$. Thus, adding a couboundary $\dd u$ to a $1$-cocycle $\xi$, 
  one can make $\xi(x_1)=0$. Then $(\ze-1)\xi(x_i)=0$ if $i>1$, whence $\xi(x_i)=\la_iu_0$, where $\la_i\in\mZ/p\mZ$.  
  Hence $ H^1(G,D_pM)\simeq\qP_1^{s-1}\simeq(\mZ/p\mZ)^{s-1}$.  
  In the same way, adding a coboundary to a $2$-cocycle $\ga$, we can make $\ga(x_1x_i)=0$ for $i>1$. 
  Then the conditions \eqref{e54} give $(\ze-1)\ga(x_i^2)=0$ for all $i$, whence $\ga(x_i^2)=\la_iu_0$ ($\la_i\in\mZ/p\mZ$), 
  and $(\ze-1)\ga(x_ix_j)=0$ for $1<i<j$, whence $\ga(x_ix_j)=\la_{ij}u_0$ ($\la_{ij}\in\mZ/p\mZ$). 
  Therefore $ H^2(G,D_pM)\simeq\mT_1^{(s^2-s+2)/2}\simeq(\mZ/p\mZ)^{(s^2-s+2)/2}$. Th generators of this group
  are cocycles $\ga_k\ (1\le k\le s)$ and $\ga_{kl}\ (1<k<l\le s)$ such that $\ga_k(x_1x_i)=\ga_{kl}(x_1x_i)$ for $i>1$,
  $\ga_k(x_i^2)=\de_{ik}u_0$, $\ga_k(x_ix_j)=0$ for $i\ne j$, $\ga_{kl}(x_i^2=0$ for all $i$ and
  $\ga_{kl}(x_ix_j)=\de_{ik}\de_{jl}u_0$.


 \end{document}